\newtheorem{theorem}{Theorem}
\newtheorem{lemma}[theorem]{Lemma}
\newtheorem{cor}[theorem]{Corollary}
\theoremstyle{definition}
\newtheorem{definition}[theorem]{Definition}
\newtheorem{example}[theorem]{Example}
\title{Permutations with a distinct divisor property}
\author{Mohammad Javaheri, Nikolai A. Krylov\\ ~ \\
Siena College, Department of Mathematics\\
515 Loudon Road, Loudonville NY 12211, USA\\ ~ \\
mjavaheri@siena.edu, nkrylov@siena.edu}
\date{}
\begin{document}
\maketitle
\begin{abstract}
A finite group of order $n$ is said to have the distinct divisor property (DDP) if there exists a permutation $g_1,\ldots, g_n$ of its elements such that $g_i^{-1}g_{i+1} \neq g_j^{-1}g_{j+1}$ for all $1\leq i<j<n$. We show that an abelian group is DDP if and only if it has a unique element of order 2. We also describe a construction of DDP groups via group extensions by abelian groups and show that there exist infinitely many non abelian DDP groups.
\end{abstract}

{\it 2010 Mathematics Subject Classification: Primary 20K01, 05E15, Secondary 05B30, 20D15, 20F22.}

{\it Keywords: distinct difference property, distinct divisor property, central extension, semidirect product.}

\section{Introduction}\label{intro}

A Costas array of order $n$ is a permutation $x_1,\ldots, x_n$ of $\{1,2,\ldots, n\}$ such that the ${n \choose 2}$ vectors $(j-i, x_j-x_i)$, $i\neq j$, are all distinct. Costas arrays were first studied by John P. Costas for their applications in sonar and radar \cite{Costas1, Costas2}. Several algebraic constructions of Costas arrays exist for special orders $n$, such as Welch, Logarithmic Welch, and Lempel constructions \cite{Golomb1, Golomb2, Golomb3}. Through exhaustive computer searches, all Costas arrays of order $n\leq 29$ have been found \cite{DISJ}. However, the problem of finding Costas arrays for larger orders becomes computationally very difficult. The weaker notion of {\it DDP permutation} requires only the consecutive \emph{distinct difference property} i.e., $x_{i+1}-x_{i} \neq x_{j+1}-x_j$ for all $1\leq i<j <n$. By recursive constructions, an abundance of DDP permutations can be found, at least $2^n$, of order $n$ \cite{BattenSane}. 

In this paper, we are interested in a notion slightly stronger than DDP. 

\begin{definition}\label{modular}
A DDP sequence mod a positive integer $n$ is a permutation $x_0,\ldots, x_{n-1}$ of the elements of $\mathbb{Z}_n=\mathbb{Z}/n\mathbb{Z}$ such that $x_0=0$ and $x_{i+1}-x_i \not \equiv x_{j+1}-x_j \pmod n$ for all $0\leq i <j <n-1$. 
\end{definition}

The first example of a DDP sequence mod 12 was introduced by F. H. Klein in 1925 as the all-interval twelve-tone row, series, or chord
$$
F,E,C,A,G,D,A\flat, D\flat, E \flat, G \flat, B \flat, C \flat,
$$
named the \emph{Mutterakkord} (mother chord) \cite{Klein}. In integers mod 12, this sequence reads
$$
0, 11, 7, 4, 2, 9, 3, 8, 10, 1, 5, 6,
$$
and the sequence of consecutive differences mod 12 is given by 11, 8, 9, 10,7, 6, 5, 2, 3, 4, 1, which are all distinct. By 1952, there were 18 known examples of all-interval series \cite{Eimert}. In 1965, IBM 7094 listed all of the 3856 examples of all-interval rows \cite{Ferentz}. Another example of 
an eleven-interval, twelve-tone row is the {\it Grandmother} chord, invented by Nicolas Slonimsky in 1938 \cite{Slonimsky}. 

\begin{figure}[H]
\centering
\includegraphics[scale=0.3]{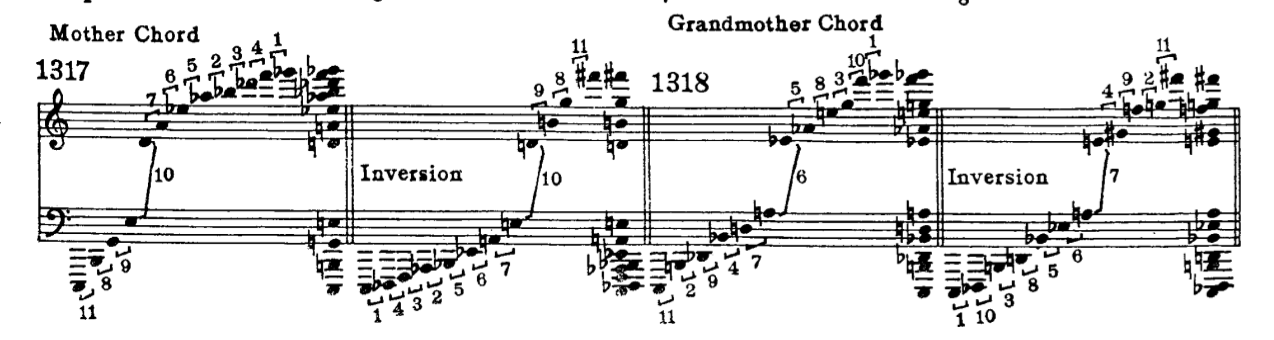}
\caption{An image of the Mother chord and Grandmother chord in Slonimsky's \emph{Thesaurus of Scales and Melodic Patterns} (p.\ 185).}
\end{figure}

 The grandmother chord has the additional property that the intervals are odd 
and even alternately, and the odd intervals decrease by one whole-tone, while the even intervals increase by one whole-tone. In integers mod 12, the grandmother chord is 
$$
0, 11, 1, 10, 2, 9, 3, 8, 4, 7, 5, 6,
$$
where the sequence of consecutive differences mod 12 is given by
11, 2, 9, 4, 7, 6, 5, 8, 3, 10, 1. Inspired by Slonimsky's grandmother chord, we define the Slonimsky sequence modulo $n$ by letting 
\begin{equation}\label{slonseq}
s_i=(-1)^{i} \lceil i/2 \rceil=\begin{cases} i/2 & \mbox{if $i$ is even;} \\ n-(i+1)/2 & \mbox{if $i$ is odd}.\end{cases}
\end{equation}

Then the sequence $s_0,\ldots, s_{n-1}$ is a DDP sequence modulo $n$ if and only if $n$ is even. 
If $x_0,\ldots, x_{n-1}$ is a DDP sequence modulo $n$, then the sequence $rx_0,\ldots, rx_{n-1}$ is also a DDP sequence modulo $n$ for each $r$ with $\gcd(r,n)=1$. Therefore, there are at least $\phi(n)$ DDP sequences mod an even integer $n$. The numbers of DDP sequences mod even integers are given by the sequence \cite{Gilbert,OEIS}
$$\href{https://oeis.org/A141599}{A141599:}~ 1, 2, 4, 24, 288, 3856, 89328, 2755968, 103653120, \ldots .$$ 
There are no DDP sequences mod odd $n$ (see Lemma \ref{none}). 

In our next definition, we generalize Definition \ref{modular}, which pertains to the group $(\mathbb{Z}_{n},+)$, to any finite group $G$. 

\begin{definition}
Let $G$ be a finite group with $n$ elements. We say a permutation $g_0,\ldots, g_{n-1}$ of elements of $G$ has the {\sl distinct divisor property} (DDP) or $g_0,\ldots, g_{n-1}$ is a DDP sequence, if $g_0=1_G$ and $g_i^{-1}g_{i+1} \neq g_j^{-1}g_{j+1}$ for all $0\leq i<j < n-1$. The set of all DDP sequences in $G$ is denoted by ${\cal O}_G$. We say $G$ is a DDP group if ${\cal O}_G \neq \emptyset$. 
\end{definition}

For odd values of $n$, instead of distinct consecutive differences, the sequence \eqref{slonseq} has distinct consecutive \emph{signed} differences. This motivates the following definition.

\begin{definition}\label{slondef}
Let $p_0,\ldots, p_{n-1}$ be a permutation of elements of an abelian group $G$ with $p_0=0$. The sequence of signed differences is defined by $h_0=0$ and $h_i=(-1)^{i-1}(p_{i-1}-p_i)$ for $1\leq i < n$. We say $p_0,\ldots, p_{n-1}$ is a {\it Slonimsky} sequence if the following conditions hold:
\begin{itemize}
\item[i)] $h_i \neq h_j$ for all $0\leq i <j  < n$.
\item[ii)] $h_i+h_{n-i}=0$ for all $0< i < n$.
\item[iii)] $p_i +p_{n-i-1} =p_{n-1}$ for all $0\leq i< n$, where we refer to $p_{n-1}$ as the last term of the sequence.
\end{itemize}
\end{definition}

For example, the following sequence is a Slonimsky sequence in $\mathbb{Z}_7$:
$$0, 6, 1, 5, 2, 4, 3,$$
and its sequence of signed differences is 0, 1, 2, 3, 4, 5, 6. Slonimsky sequences in odd abelian groups play an important role in constructing DDP sequences via group extensions, and we study 
them in section \ref{Slonimsky}.

This is how this paper is organized. In section \ref{Slonimsky}, we show that every odd abelian group has a Slonimsky sequence. In section \ref{extensions}, we use the existence of Slonimsky sequences in odd abelian groups to show that every central extension of an even DDP group by an odd abelian group is DDP (see Cor.\ \ref{centralext}). We also show that for every odd nilpotent group $G$ and an even DDP group $K$, the direct product $G \times K$ is DDP (see Theorem \ref{nil2}). In particular, $G \times \mathbb{Z}_{2^m}$ is DDP for every odd nilpotent group $G$ and every integer $m\geq 1$. 

In section \ref{abelian}, we show that a finite abelian group is DDP if and only if it has a unique element of order 2. We also find a lower bound on the number of DDP sequences in an abelian group $G$ in terms of the prime factorization of its order. In particular, 
we will show that if $n=2^m kl$ for $m\geq 1$ and relatively prime odd integers $k,l$, then there are at least $(2k)^{l-1}$ DDP sequences modulo the even integer $n$ (see Cor.\ \ref{sizeo}). Finally, in section \ref{nonabelian},  we will show that there are infinitely many non abelian DDP groups. 

\section{Slonimsky sequences in abelian groups}\label{Slonimsky}

In this section, we prove that every abelian odd group has a Slonimsky sequence. This result will only be needed in the proof of Theorem \ref{extconst} and can be skipped in a first reading. We begin with the cyclic case. 

\begin{lemma}\label{agaf}
If $n$ is odd, then $G=(\mathbb{Z}_n,+)$ has a Slonimsky sequence with the last term $(n-1)/2$. 
\end{lemma}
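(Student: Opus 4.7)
The plan is to verify that the sequence $s_0,\ldots,s_{n-1}$ already defined in the introduction by \eqref{slonseq}, namely $s_i=(-1)^i\lceil i/2\rceil$, is itself a Slonimsky sequence in $\mathbb{Z}_n$ when $n$ is odd. All three conditions of Definition \ref{slondef} will then be verified by direct computation; no clever choice of permutation is needed beyond taking this natural candidate.

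First I would check that $s_0,\ldots,s_{n-1}$ is indeed a permutation of $\mathbb{Z}_n$ with $s_0=0$. For even $i=2k$ (with $0\leq k\leq (n-1)/2$) we have $s_{2k}=k$, sweeping out $\{0,1,\ldots,(n-1)/2\}$; for odd $i=2k+1$ (with $0\leq k\leq (n-3)/2$) we have $s_{2k+1}=-(k+1)\pmod n$, sweeping out $\{(n+1)/2,\ldots,n-1\}$. Because $n$ is odd, $n-1$ is even, so the last term is $s_{n-1}=(n-1)/2$, which is the prescribed value.

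Next I would compute the signed differences $h_i$ and show that $h_i\equiv i\pmod n$ for all $0\leq i<n$. For $i=2k$, using $s_{2k-1}=-k$ and $s_{2k}=k$, one finds $h_{2k}=(-1)^{2k-1}(s_{2k-1}-s_{2k})=-(-k-k)=2k$. For $i=2k+1$, using $s_{2k}=k$ and $s_{2k+1}=-(k+1)$, one finds $h_{2k+1}=(-1)^{2k}(s_{2k}-s_{2k+1})=k+(k+1)=2k+1$. Condition (i) is then immediate since the values $0,1,\ldots,n-1$ are distinct mod $n$, and condition (ii) reduces to the trivial identity $i+(n-i)\equiv 0\pmod n$.

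The only step that requires mild care is condition (iii), and I would prove it by splitting on the parity of $i$. Since $n$ is odd, $i$ and $n-i-1$ have opposite parities. If $i=2k$, then $n-i-1$ is even with $s_{n-i-1}=(n-2k-1)/2$, and $s_i+s_{n-i-1}=k+(n-2k-1)/2=(n-1)/2$. If $i=2k+1$, then $n-i-1=n-2k-2$ is odd, and writing this odd index in the form $2m+1$ gives $s_{n-i-1}=-(n-2k-1)/2$, so $s_i+s_{n-i-1}=-(k+1)-(n-2k-1)/2\equiv (n-1)/2\pmod n$. Either way the sum equals $s_{n-1}$, confirming (iii). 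There is no real obstacle here; the mild nuisance is keeping the parity case analysis and the sign conventions in $\mathbb{Z}_n$ straight, since $s_i$ is defined as a signed integer but the Slonimsky conditions are interpreted modulo $n$.
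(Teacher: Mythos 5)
Your proposal is correct and follows essentially the same route as the paper: both take the sequence $p_i=(-1)^i\lceil i/2\rceil$ and verify the three Slonimsky conditions directly, with the paper computing $h_i=\lceil (i-1)/2\rceil+\lceil i/2\rceil=i$ in one stroke where you split by parity. One small slip: since $n$ is odd, $i$ and $n-i-1$ have the \emph{same} parity (their sum $n-1$ is even), not opposite parities as you wrote --- but your subsequent case analysis already treats them as having the same parity, so the computation itself is fine.
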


\begin{proof}
Let $p_i=(-1)^{i} \lceil i/2 \rceil \mod n$ for $0\leq i \leq n-1$. Then, for $1\leq i \leq n-1$, we have
\begin{align}\nonumber
h_i  =(-1)^{i-1}(p_{i-1}-p_i) & =(-1)^{i-1} \left ( (-1)^{i-1}\lceil (i-1)/2 \rceil - (-1)^{i} \lceil i/2 \rceil \right ) \\ \nonumber
 & =\lceil (i-1)/2 \rceil + \lceil i/2 \rceil = i,
 \end{align}
 hence property (i) in Definition \ref{slondef} holds. Moreover, $h_i+h_{n-i}=i+n-i=0 \pmod n$ and $p_i+p_{n-i-1}=(-1)^{i}\lceil i/2 \rceil +(-1)^{n-i-1} \lceil (n-i-1)/2 \rceil=(n-1)/2$ whether $i$ is even or odd. It follows that $p_0,\ldots, p_{n-1}$ is a Slonimsky sequence.
\end{proof}

\begin{theorem}\label{spddp}
Let $G=\mathbb{Z}_{m_1}\times \cdots \times \mathbb{Z}_{m_d}$ be an odd abelian group. Then there exists a Slonimsky sequence in $G$ with the last term
$$((m_1-1)/2,\ldots, (m_d-1)/2).$$
\end{theorem}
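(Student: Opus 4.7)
The plan is induction on $d$, with Lemma~\ref{agaf} as the base case $d=1$. For the inductive step I take a Slonimsky sequence $p^{(1)}=(p^{(1)}_0,\ldots,p^{(1)}_{M-1})$ in $G_1:=\mathbb{Z}_{m_1}\times\cdots\times\mathbb{Z}_{m_{d-1}}$ (with $M:=|G_1|$) whose last term is $c_1:=((m_1-1)/2,\ldots,(m_{d-1}-1)/2)$, together with the cyclic Slonimsky sequence $p^{(2)}$ in $\mathbb{Z}_m$ supplied by Lemma~\ref{agaf} (where $m:=m_d$ and $N:=Mm$), and I aim to build a Slonimsky sequence in $G:=G_1\times\mathbb{Z}_m$ whose last term is $(c_1,(m-1)/2)$.

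The first step is to restate Definition~\ref{slondef} in terms of the signed-difference sequence $h_t=(-1)^{t-1}(p_{t-1}-p_t)$; inverting gives $p_t=\sum_{s=1}^{t}(-1)^s h_s$. A short check shows that the conditions on $p$ become: (a) $h_0=0$ and $h_0,\ldots,h_{N-1}$ is a permutation of $G$; (b) $h_t+h_{N-t}=0$ for $1\le t\le N-1$; (c) the partial sums $p_t$ are pairwise distinct. Using that $N$ is odd, a telescoping argument shows that (a) and (b) force condition (iii) of Definition~\ref{slondef} automatically, and also yield the identity $p_{N-1}=2\sum_{t=1}^{(N-1)/2}(-1)^t h_t$.

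Next I would build $h$ on $G$ by a block structure: set $h_{km}:=(h^{(1)}_k,0)$ for $k=0,\ldots,M-1$, and $h_{km+i}:=(g_k,h^{(2)}_i)$ for $1\le i\le m-1$, where $g_0,\ldots,g_{M-1}$ is a permutation of $G_1$ with $g_k+g_{M-1-k}=0$ (which automatically forces $g_{(M-1)/2}=0$). Properties (a) and (b) for $h$ on $G$ are then inherited from those of $h^{(1)}$ and $h^{(2)}$. Computing $p_{N-1}$ by splitting the sum into block-boundary and intra-block contributions, the first coordinate of the intra-block part collapses because $\sum_{i=1}^{m-1}(-1)^i=0$ (as $m-1$ is even), while $\sum_{k=0}^{M-1}(-1)^k=1$ (as $M$ is odd) isolates a single copy of $(m-1)/2$ in the second coordinate; combined with the block-boundary contribution $(c_1,0)$ this gives the required last term $(c_1,(m-1)/2)$.

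The main obstacle is condition (c): the partial sums $p_t$ must be pairwise distinct, i.e., $p$ must actually be a permutation of $G$. This does not follow from (a) and (b), and it is where the real content of the inductive step lies. A naive linear choice such as $g_k:=2p^{(1)}_k-c_1$ satisfies the symmetry and already works in $\mathbb{Z}_3\times\mathbb{Z}_3$, but it produces a coincidence $p_{t}=p_{t'}$ already for $\mathbb{Z}_5\times\mathbb{Z}_3$. To push the argument through I would either tie the $g_k$'s directly to $h^{(1)}$ so that collisions are ruled out by structural reasons, or strengthen the inductive hypothesis so that the Slonimsky sequence in $G_1$ comes equipped with enough additional flexibility; distinctness of the $p_t$ would then be verified by analyzing a putative equality $p_t=p_{t'}$ according to the residues $t,t'\pmod{m}$, reducing in each case to an identity precluded by the Slonimsky structure on $G_1$ and on $\mathbb{Z}_m$.
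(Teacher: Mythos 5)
Your framework is essentially a transposed version of the paper's: the paper also builds the new sequence by prescribing its signed differences in blocks, but it uses $m_d$ blocks of length $|G_1|$ with the per-block constant placed in the cyclic factor $\mathbb{Z}_{m_d}$, where it can be written down explicitly as $(-1)^q l + 2\lceil q/2\rceil$ (with $m_d=2l-1$), whereas you use $M$ blocks of length $m_d$ with the per-block constant $g_k$ living in the general group $G_1$. Your reductions are correct as far as they go: properties (a) and (b) of the signed differences, the deduction of condition (iii) of Definition~\ref{slondef} from (b) and the oddness of $N$, and the last-term computation all check out. But you stop exactly at the step that carries all the content --- exhibiting $g_0,\ldots,g_{M-1}$ for which the partial sums are pairwise distinct --- and offer only two unexecuted strategies for finding them. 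As written, this is not a proof of the theorem.

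The gap is smaller than you suggest, because your own setup pins down what is needed. Writing out the partial sums block by block gives $p_{km+i}=\bigl(p^{(1)}_k+(-1)^k\epsilon_i g_k,\ \delta_k(m-1)/2+(-1)^k p^{(2)}_i\bigr)$, where $\epsilon_i=-1$ for $i$ odd, $\epsilon_i=0$ for $i$ even, and $\delta_k$ is the parity of $k$. Comparing second coordinates and using property (iii) of $p^{(2)}$ forces $i$ and $i'$ in any collision to have the same parity; the even case then reduces to injectivity of $p^{(1)}$, and the odd case reduces to injectivity of the single map $k\mapsto p^{(1)}_k-(-1)^k g_k$. So beyond being a permutation of $G_1$ with $g_k+g_{M-1-k}=0$, the $g_k$ need only make that one map injective. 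The signed variant of your ``naive'' choice, $g_k:=(-1)^k\bigl(2p^{(1)}_k-c_1\bigr)$, does all three: a coincidence $g_k=g_{k'}$ across parities would force $p^{(1)}_k+p^{(1)}_{k'}=c_1$, i.e.\ $k'=M-1-k$, which has the same parity as $k$ since $M-1$ is even, a contradiction (and within a parity class injectivity follows because doubling is injective on the odd group $G_1$); antisymmetry is immediate from $p^{(1)}_{M-1-k}=c_1-p^{(1)}_k$; and $p^{(1)}_k-(-1)^k g_k=c_1-p^{(1)}_k=p^{(1)}_{M-1-k}$ is visibly a permutation. With such a choice inserted and the collision analysis written out, your route closes and is a legitimate alternative to the paper's; without it, the inductive step is missing.
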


\begin{proof}
Proof is by induction on $d$. The claim for $d=1$ follows from Lemma \ref{agaf}. For $d>1$, let $H=\mathbb{Z}_{m_1} \times \cdots \times \mathbb{Z}_{m_{d-1}}$ and $m_d=m=2l-1$. By the inductive hypothesis for $H$, there exists a Slonimsky sequence $p_0,\ldots, p_{n-1}$ in $H$ with signed differences $h_0,\ldots, h_{n-1}$ such that 
\begin{align}
h_i+h_{n-i} & =0,~ \forall i \in \{1,\ldots, n-1\}; \\
 p_i +p_{n-i-1} & =((m_1-1)/2,\ldots, (m_{d-1}-1)/2),~ \forall i \in \{0,\ldots, n-1\}.
\end{align}
In order to define the Slonimsky sequence $P_0,\ldots, P_{mn-1}$ in $G=H\times \mathbb{Z}_m$, we first define its sequence of signed differences $g_i$, $1\leq i \leq mn$, in $G$ as follows. For $1\leq i \leq mn$, write $i=qn+r$, where $0\leq q \leq m-1$ and $0\leq r \leq n-1$. If $r=0$, we let $g_i=(0_H,q) \in H \times \mathbb{Z}_m$, and if $0<r\leq m-1$, we let 
$$g_i=\left (h_r,(-1)^{q}l+2\lceil q/2 \rceil \right).$$
We first show that $g_0,\ldots, g_{mn-1}$ is a permutation of elements of $G$. Suppose $g_i = g_j$, where $i=qn+r$ and $j=pn+t$. If $r=0$, then $g_j=g_i=(0_H,q)$ which implies that $t=0$, hence $g_j=(0_H,p)$, and so $p=q \Rightarrow i=j$. Thus, suppose that $r,t \neq 0$. It follows from $g_i=g_j$ that $h_r=h_t$, and so $r=t$. It also follows from $g_i=g_j$ that $(-1)^ql+2 \lceil q/2 \rceil=(-1)^pl+2 \lceil p/2 \rceil$. If $p-q$ is odd, we conclude that $|2\lceil q/2 \rceil - 2\lceil p/2 \rceil |=2l$, which is a contradiction, since $2\lceil p/2 \rceil, 2\lceil q/2 \rceil \in \{0, 2, \ldots, 2l-2\}$. If $p-q$ is even, we conclude that $2\lceil q/2 \rceil = 2\lceil p/2 \rceil$, which implies that $p=q \Rightarrow i=j$. Therefore, $g_0,\ldots, g_{mn-1}$ is a permutation of elements of $G$.

Next, we define $P_i=\sum_{k=0}^i (-1)^{k}g_k$ and show that $P_0,\ldots, P_{mn-1}$ is a Slonimsky sequence in $G$. A simple induction shows that for $i=qn+r$ with $0\leq q \leq m-1$ and $0\leq r \leq n-1$, we have
$$
P_i=\begin{cases} 
(p_r,q/2) & \mbox{if $q$ is even and $r$ is even;} \\
(p_r,-l-q/2)& \mbox{if $q$ is even and $r$ is odd;}\\
(p_{n-r-1},-(q+1)/2) & \mbox{if $q$ is odd and $r$ is even;}\\
(p_{n-r-1},-l+(q+1)/2) & \mbox{if $q$ is odd and $r$ is odd.}
\end{cases}
$$
We need to show that $P_0,\ldots, P_{mn-1}$ is a permutation of elements of $G$. Suppose that $P_i=P_j$ for $i=qn+r$ and $j=pn+t$. If $r,t$ are both even or both odd, from $P_i=P_j$, we conclude that $p=q$. Thus, without loss of generality, suppose that $p$ is even and $q$ is odd. Then $p_t=p_{n-r-1}$ and so $t=n-r-1$ which implies that $t$ and $r$ are both even or both odd. If they are both odd, then $p/2=-l+(q+1)/2$ modulo $m$, and if they are both even, then $-l-p/2=-(q+1)/2$ modulo $m$. In either case we gave $p/2-(q+1)/2=-l \pmod m$, which is a contradiction since $1-l \leq p/2-(q+1)/2 \leq l-2$.

Next, we show that $g_i+g_{mn-i}=0$ for all $1\leq i \leq mn-1$. Let $i=qn+r$, where $0\leq q \leq n-1$ and $0\leq r \leq m-1$. So we can write $mn-i=(m-q-1)n+n-r$. Suppose $r\neq 0$. Then
$$g_i+g_{mn-i}=\left (h_r,(-1)^{q}l+2\lceil q/2 \rceil \right)+\left (h_{n-r},(-1)^{m-q-1}l+2\lceil (m-q-1)/2 \rceil \right).$$
Since $h_r+h_{n-r}=0$ and $m-1$ is even, this simplifies to
$$g_i+g_{mn-i}=(0,(-1)^q2l+2\lceil q/2 \rceil +2 \lceil (-q/2) \rceil-1)=(0,0) \in H \times \mathbb{Z}_m.$$
If $r=0$, then $g_i=(0,q)$ and one writes $mn-i=(m-q)n$. Therefore, $g_{mn-i}=(0,m-q)$ which again leads to $g_i+g_{mn-i}=(0,0)$. 

Finally, we claim that $P_i+P_{mn-i-1}= ((m_1-1)/2,\ldots, (m_d-1)/2)$ for all $i \in \{0,\ldots, mn-1\}$. We have $p_r+p_{m-r-1}=((m_1-1)/2,\ldots, (m_{d-1}-1)/2)$ for all $r=0,\ldots, m-1$ by the inductive hypothesis. Let $i=qn+r$, and so $mn-i-1=(m-q-1)n+n-r-1$. If $q$ is even and $r$ is odd, then
\begin{align}\nonumber
P_i+P_{mn-i-1} & =(p_r+p_{n-r-1},(m-1)/2) \\ \nonumber
& =((m_1-1)/2,\ldots, (m_{d-1}-1)/2,(m-1)/2).
\end{align}
The claim in other cases follows similarly.  
\end{proof}

\section{Central extensions}\label{extensions}

In this section, we describe a construction of DDP sequences via group extensions. Let $G$ be a group extension of $H$ by $N$ i.e., suppose that 
$1 \rightarrow N \rightarrow G \xrightarrow{\pi} H \rightarrow 1$ is a short exact sequence. We will describe an algorithm to \emph{lift} a DDP sequence 
in $H$ to $G$. By a lift of the DDP sequence $h_1,\ldots, h_{|H|}$ in $H$ to $G$, we mean a DDP sequence $g_1,\ldots, g_{|G|}$ such that $\pi(g_i)=h_i$ 
for $i=1,\ldots, |H|$. 

It turns out that in order for our algorithm of lifting a DDP sequence from $H$ to $G$ work, the group $N = \ker(\pi)$ must contain no \emph{real} elements of 
$G$ except the identity. 

\begin{definition}
An element $h\in G$ is said to be a {\it real} element of $G$ if there exists $g\in G$ such that $g^{-1}hg=h^{-1}$. We denote the set of real elements of 
$G$ by ${\cal R}(G)$. 
\end{definition}

Let $N$ be a normal subgroup of $G$. If the only real element of $G$ in $N$ is $1_G$ i.e., $N \cap {\cal R}(G)=\{1_G\}$, then
\begin{equation}\label{defreal}
\forall h \in N \backslash \{1_G\}~\forall g\in G: hgh \neq g,
\end{equation}
or equivalently, for abelian $N$,
$$\forall g \in G~\forall h_1,h_2\in N: h_1 \neq h_2 \Rightarrow h_1gh_1 \neq h_2gh_2.$$
If $N$ is contained in the center of $G$, then $N \cap {\cal R}(G)=\{1_G\}$ is equivalent to $N$ having odd order. 

\begin{theorem}\label{extconst}
Let $\pi: G \rightarrow H$ be an epimorphism such that $\ker (\pi)$ is an abelian group of odd order $m$ with $\ker(\pi) \cap {\cal R}(G)=\{1_G\}$. If 
$H$ is an even DDP group, then $G$ is an even DDP group. More precisely, let $p_0,\ldots, p_{n-1}$ be a DDP sequence in $H$. Then there exist at 
least $(2m)^{(n-1-e)/2}$ DDP sequences $P_0,\ldots, P_{mn-1}$ in $G$ such that $\pi(P_i)=p_i$ for all $i=0,\ldots, n-1$, where $e$ is the number of 
elements of order 2 in $H$. In particular 
$$|{\cal O}_G| \geq |{\cal O}_H |\times (2m)^{(n-1-e)/2}.$$
\end{theorem}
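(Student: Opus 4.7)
The plan is to lift the given DDP sequence $p_0,\ldots,p_{n-1}$ of $H$ to $G$ fiber by fiber, inserting a Slonimsky sequence in the kernel $N=\ker(\pi)$ into each fiber. Such a Slonimsky sequence exists by Theorem \ref{spddp} since $N$ is abelian of odd order $m$, and the overall construction mimics the proof of Theorem \ref{spddp}, with the role played there by the cyclic group $\mathbb{Z}_m$ now played by $N$.

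First I would fix lifts $\tilde p_0=1_G,\tilde p_1,\ldots,\tilde p_{n-1}$ of the $p_i$'s and a Slonimsky sequence $q_0=0,q_1,\ldots,q_{m-1}$ in $N$ with distinct signed differences $\eta_i$ satisfying $\eta_i+\eta_{m-i}=0$. Writing each index $j\in\{0,\ldots,mn-1\}$ as $j=qn+r$ with $0\le q<m$ and $0\le r<n$, I would set
\[
P_j \;=\; \nu(q,r)\,\tilde p_{\rho(q,r)},
\]
where $\rho(q,r)=r$ for even $q$ and $\rho(q,r)=n-1-r$ for odd $q$, so that consecutive blocks of length $n$ alternately traverse $p_0,\ldots,p_{n-1}$ and its reversal. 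The correction $\nu(q,r)\in N$ is a zigzag in the $q_i$'s patterned on the formula for $P_i$ in Theorem \ref{spddp}, calibrated so that every interior divisor $P_j^{-1}P_{j+1}$ projects to $d_r=p_r^{-1}p_{r+1}$ (or $d_{n-2-r}^{-1}$ in a backward block), while every block-boundary divisor lies in $N$ and is a prescribed signed combination of the $\eta_i$'s. In particular $\pi(P_i)=p_i$ for $i=0,\ldots,n-1$ by design.

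The next step is to verify that $\{P_j\}$ is a permutation of $G$ and that the consecutive divisors $c_j=P_j^{-1}P_{j+1}$ are pairwise distinct. The permutation property and the distinctness of boundary divisors run parallel to the proof of Theorem \ref{spddp}, using the Slonimsky property of the $\eta_i$'s together with the normality of $N$ to shuffle $N$-factors through lifts. Distinctness of two interior divisors in blocks of the same parity reduces either to distinctness of the $H$-differences $d_r$, or, when their $H$-projections coincide, to distinctness of $\eta_i$'s arising as the intervening telescope.

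The main obstacle, and the unique point at which the realness hypothesis is invoked, is a collision between interior divisors from blocks of opposite parity. Projecting $c_i=c_j$ to $H$ first forces $d_r^2=1_H$, so the common $H$-projection must be an involution in $H$; pushing all $N$-factors to one side via normality, the equation then rearranges into one of the form $k\,g\,k=g$ for some $k\in N$ determined by a nonzero telescope of $\eta_i$'s and some $g\in G$ built from the chosen lift of $d_r$. By \eqref{defreal}, the hypothesis $N\cap{\cal R}(G)=\{1_G\}$ rules this out. Thus $P_0,\ldots,P_{mn-1}$ is a DDP sequence in $G$. Finally, the construction admits $2m$ independent choices (cyclic shift in $N$ and orientation) for the Slonimsky segment attached to each of the $(n-1-e)/2$ non-involution inverse pairs $\{d_r,d_r^{-1}\}$ of $H$-differences, yielding at least $(2m)^{(n-1-e)/2}$ lifts above a fixed $H$-DDP sequence; summing over ${\cal O}_H$ produces the stated bound $|{\cal O}_G|\ge|{\cal O}_H|\cdot(2m)^{(n-1-e)/2}$.
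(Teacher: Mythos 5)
Your high-level architecture matches the paper's: traverse lifts of the $H$-sequence in $m$ blocks, alternately forward and backward, glued by a Slonimsky sequence in $N=\ker(\pi)$, so that block-boundary divisors are the signed differences and interior divisors lie over $h_r$ or $h_r^{-1}$. But there is a genuine gap in your treatment of collisions between interior divisors from blocks of opposite parity. You assert that such a collision forces $d_r^2=1_H$; this is false. A forward-block divisor projects to some $h_r$ while a backward-block divisor projects to some $h_k^{-1}$, so a collision only forces $h_r=h_k^{-1}$, i.e.\ $k=\sigma(r)$ where $\sigma$ is the index involution induced by $h\mapsto h^{-1}$. This threat arises for \emph{every} $r$, not just for the $e+1$ indices with $h_r^2=1_H$: for each non-involution pair $\{h_r,h_r^{-1}\}$ there is a forward divisor and a backward divisor with the same $H$-projection, and ruling out their equality is the heart of the proof. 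The paper does this by choosing the lifts $g_r$ and $g_{\sigma(r)}$ in a coordinated way (Eq.~\eqref{onestar}), twisting by $y_N=\alpha_{m-1}$ according to the parities of $r$ and $\sigma(r)$, so that a collision forces $\alpha_p\alpha_q=y_N$ or $\alpha_p=\alpha_q$ for blocks $p,q$ of opposite parity, contradicting the Slonimsky properties. Your proposal leaves $\nu(q,r)$ and the lifts $\tilde p_i$ unconstrained at exactly this point, so the construction as described need not yield distinct divisors.

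Two further issues. First, the indices with $h_r^2=1_H$ require their own construction: one must pick $g_r\in\pi^{-1}(h_r)$ satisfying $g_r^{-1}=y_N^{\pm1}g_ry_N^{\pm1}$, which the paper achieves by solving $w_r^2=v_ry_N^{\pm1}$ in $N$ --- this uses both $N\cap{\cal R}(G)=\{1_G\}$ (to show $\pi^{-1}(h_r)=\{\alpha f_r\alpha:\alpha\in N\}$) and the oddness of $|N|$ (to extract square roots); your sketch omits this entirely. Second, your localization of the realness hypothesis is off: the paper needs it precisely for \emph{same}-parity collisions, where $\alpha_pg_r\alpha_p=\alpha_qg_r\alpha_q$ must imply $p=q$, whereas the opposite-parity cases are dispatched by the Slonimsky identities. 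Finally, the factor $2m$ per non-involution pair should come from the two ways of assigning $\{r,\sigma(r)\}$ to the set $A$ together with the $m$ choices of the lift $g_r$ in its fiber, not from shifting or reorienting the Slonimsky sequence, which is a single global choice rather than one made per pair.
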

\begin{proof}
Let $p_0,\ldots, p_{n-1}$ be a DDP sequence in $H$. We define $h_0=1_H$ and $h_r=p_{r-1}^{-1}p_r$ for $1\leq r \leq n-1$. We define a bijection 
$\sigma: \{0,\ldots, n-1\} \rightarrow \{0,\ldots, n-1\}$ by letting $\sigma(r)$ to be the unique number in $\{0,\ldots, n-1\}$ such that $h_{\sigma(r)}=h_r^{-1}$. Let
$$
I=\{0\leq r \leq n-1: \sigma(r)=r\},
$$
and let $A$ be a set obtained by including exactly one of $r$ or $\sigma(r)$ for every $r \in \{0,\ldots, n-1\} \backslash I$, and define 
$B=\{0,\ldots, n-1\} \backslash (A \cup I)$. Clearly, $0\in I$ and there are $2^{(n-|I|)/2}$ choices for $A$.

Let also $\alpha_0,\alpha_1,\ldots, \alpha_{m-1}$ be a Slonimsky sequence in $N=\ker(\pi)$; such a special DDP sequence exists by Theorem \ref{spddp}, 
since $N$ has odd order. Let $\beta_0,\ldots, \beta_{m-1}$ be the sequence of signed differences. Let us denote the element 
$\alpha_{m-1}$ by $y_N$. By the definition of Slonimsky sequence, one has
\begin{align}
\alpha_i\alpha_{m-1-i}  = y_N = \alpha_{m-1}, ~ & \forall 0\leq i \leq m-1 \\
\beta_i\beta_{m-i}  =0_N, ~ &  \forall 1\leq i \leq m-1
\end{align}

In order to define the sequence $P_0,\ldots, P_{mn-1}$, we first define its sequence of consecutive differences $g_0,\ldots, g_{mn-1}$ 
as follows. For each $r\in A$, we let $g_r$ be an arbitrary element of $\pi^{-1}(h_r)$. Also, if $r\in A$, we define

\begin{equation}\label{onestar}
g_{\sigma(r)}=
\begin{cases}
g_r^{-1} & \mbox{if $r+\sigma(r)$ is odd;}\\
y_Ng_r^{-1}y_N & \mbox{if $r$ and $\sigma(r)$ are both odd;}\\
y^{-1}_Ng_r^{-1}y_N^{-1} & \mbox{if $r$ and $\sigma(r)$ are both even.}
\end{cases}
\end{equation}
To define $g_r$ for $r\in I$, choose $f_r \in \pi^{-1}(h_r)$ to be arbitrary. Then one can show that
$$
\pi^{-1}(h_r) = \{\alpha_if_r\alpha_i~|~ i\in\{1,\ldots ,m\} ~\mbox{and}~\alpha_i\in N\},
$$
and hence there exists $v_r\in N$ such that $f_r^{-1}=v_rf_rv_r$, since $\pi(f_r^{-1})=h_r=\pi(f_r)$. Then, choose $w_r\in N$ such that 
$w_r^2=v_ry_N^{(-1)^{r+1}}$, and let $g_r=w_rf_rw_r$. 
It follows from this definition that
$$
g_r^{-1}=\begin{cases} y_Ng_ry_N& \mbox{if $r \in I$ is even;} \\
y_N^{-1}g_ry_N^{-1} & \mbox{if $r \in I $ is odd}.
\end{cases}
$$

Next, we define $g_i$ for all $n\leq i \leq mn-1$. The idea is to present $g_0,\ldots ,g_{mn-1}$ as a union of $m$ blocks each 
containing $n$ elements so that $\pi$ 
maps each block onto $H$, alternating in increasing (for even blocks) or decreasing (for odd blocks) order of indices. To be more precise, by the Euclidean algorithm, there exist unique integers 
$0\leq r \leq n-1$ and $0\leq q \leq m-1$ such that $i=nq+r$. If $r=0$, let $g_{i}=\beta_q$. If $r \geq 1$, let 
\begin{equation}\label{twostar}
g_i=\begin{cases} \alpha_{q}g_{n-r}^{-1}\alpha_q & \mbox{if $q$ is odd and $r$ is odd};\\
\alpha_{q}^{-1}g_{n-r}^{-1}\alpha_q^{-1} & \mbox{if $q$ is odd and $r$ is even};\\
\alpha_{q}^{-1}g_{r}\alpha_q^{-1} & \mbox{if $q$ is even and $r$ is odd};\\
\alpha_{q}g_{r}\alpha_q & \mbox{if $q$ is even and $r$ is even}.
\end{cases} 
\end{equation}

We claim that the sequence $P_i=\prod_{k=0}^ig_k$, $0\leq i \leq mn-1$, is a DDP sequence. We prove by induction on $0\leq i \leq mn-1$ that for $i=nq+r$, 
we have
\begin{equation}\label{partials}
P_i=\begin{cases} P_{n-r-1}\alpha_q & \mbox{if $q$ is odd and $r$ is odd};\\
P_{n-r-1}\alpha_q^{-1} & \mbox{if $q$ is odd and $r$ is even};\\
P_{r}\alpha_q^{-1} & \mbox{if $q$ is even and $r$ is odd};\\
P_{r}\alpha_q & \mbox{if $q$ is even and $r$ is even}.
\end{cases} 
\end{equation}
The claim is clearly true for all $0\leq i \leq n-1$. Suppose the claim is true for $i=nq+r$. Suppose that $q$ and $r$ are both odd. 
The proof in all other cases is similar. If $r=n-1$ then
$$
P_{i+1}=P_i  g_{i+1}=(P_0 \alpha_q)\beta_{q+1}=P_0\alpha_{q+1}
$$
as claimed. If $0\leq r<n-1$. Then $i+1=nq+(r+1)$ and we have
$$
P_{i+1}=P_i g_{i+1}=(P_{n-r-1}\alpha_q)(\alpha_{q}^{-1}g_{n-r-1}^{-1}\alpha_q^{-1})=P_{n-r-2}\alpha_q^{-1}
$$
as claimed. It follows from \eqref{partials} that $P_i \neq P_j$ for $0\leq i<j \leq mn-1$. To see this, suppose $P_i=P_j$ for $i=nq_1+r_1$ and $j=nq_2 +r_2$. 
Suppose that $q_1$ and $q_2$ are even. The proof in other cases is similar. Then $p_{r_1}=\pi(P_i)=\pi(P_j)=p_{r_2}$ which implies that $r_1=r_2=r$. But then 
$\alpha_{q_1}=(P_r^{-1}P_i)^{\pm 1}=(P_r^{-1} P_j)^{\pm 1}=\alpha_{q_2}$, and so $q_1=q_2$, hence $i=j$. 

Next, we show that $g_i \neq g_j$ for all $0\leq i<j \leq mn-1$. On the contrary, suppose that $g_i=g_j$ for $i=qn+r$ and $j=pn+s$ where $1\leq r,s < n$. 
There are two cases:

Case 1. $p \equiv q \pmod 2$. If $p,q$ are both even, then $h_r=\pi(g_i)=\pi(g_j)=h_s$, and if $p,q$ are both odd, then 
$h_{n-r}=\pi(g_i)^{-1}=\pi(g_j)^{-1}=h_{n-s}$. In either case, we conclude that $r=s$. If $r$ is even, this implies that 
$\alpha_p g_r \alpha_p=\alpha_q g_r \alpha_q$ (if $p,q$ are even) or $\alpha_q^{-1}g_{n-r}^{-1}\alpha_q^{-1}=\alpha_p^{-1}g_{n-r}^{-1}\alpha_p^{-1}$ 
(if $p,q$ are odd). In either case, since $N\cap{\cal R}(G) = \{1_G\}$, we must have $p=q$, and so $i=j$. 

Case 2. Without loss of generality, suppose $q$ is even and $p$ is odd. Then 
$\alpha_q^{\pm 1} g_r \alpha_q^{\pm 1} = \alpha_p^{\pm 1} g_{n-s}^{-1} \alpha_p^{\pm 1}$. 
By projecting onto $H$ via $\pi$, we must have $h_r=h_{n-s}^{-1}$. If $r=n-s \in I$, then $r$ and $s$ are both even or both odd. If they are both even, 
it follows from $g_i = g_j$ that $\alpha_q g_r \alpha_q=\alpha_p^{-1}g_r^{-1}\alpha_p^{-1}$ which implies that $\alpha_p\alpha_q = y_N$, 
which is a contradiction, since $p$ and $q$ have different parity. If $r$ and $s$ are both odd, then 
$\alpha_q^{-1} g_r \alpha_q^{-1}=\alpha_pg_r^{-1}\alpha_p$ which leads to the same contradiction. 

Thus, suppose $r \in A \cup B$. Without loss of generality, suppose $r\in A$, and so $n-s=\sigma(r) \in B$. 
If both $r$ and $s$ are odd, according to Eq.\ \eqref{onestar} we have 
$\alpha_q^{-1}g_r\alpha_q^{-1} = \alpha_pg_{\sigma(r)}^{-1}\alpha_p = \alpha_py_N^{-1}g_r y_N^{-1}\alpha_p$, which implies 
$\alpha_p\alpha_q = y_N$, a contradiction. Similarly, if $r$ and $s$ are both even, we have 
$\alpha_qg_r\alpha_q = \alpha_p^{-1}g_{\sigma(r)}^{-1}\alpha_p^{-1} = \alpha_p^{-1}y_Ng_ry_N \alpha_p^{-1}$, which again implies 
$\alpha_p\alpha_q = y_N$, a contradiction. If $r$ is odd and $\sigma(r)$ is even, then $\alpha_q^{-1}g_r\alpha_q^{-1}=\alpha_p^{-1}g_{\sigma(r)}^{-1}\alpha_p^{-1}=\alpha_p^{-1}g_r\alpha_p^{-1}$ which implies that $\alpha_p=\alpha_q$, a contradiction. Finally, if $r$ is even and $\sigma(r)$ is odd, then $\alpha_qg_r\alpha_q=\alpha_pg_{\sigma(r)}^{-1}\alpha_p=\alpha_p g_r \alpha_p$ which implies that $\alpha_q=\alpha_p$, a contradiction. 

We have shown that $P_0,\ldots, P_{mn-1}$ is a DDP sequence in $G$ with $\pi(P_i)=p_i$ for all $0\leq i \leq n-1$. Recall that in constructing 
the set $A$, we have two choices per each pair $(r, \sigma(r))$. Moreover, for each $r\in A$, we have $m$ choices in defining $g_r$. It follows that 
there are at least $(2m)^{|A|}=(2m)^{(n-|I|)/2}$ DDP sequences which are lifts of a given DDP sequence in $H$. Since $I$ is comprised of $1_H$ and 
elements of order 2, each DDP sequence in $H$ has at least $(2m)^{(n-e-1)/2}$ lifts to $G$, where $e$ is the number of elements of order 2 in $H$.
\end{proof}

\begin{cor}\label{centralext}
Every central extension of an even DDP group by an odd abelian group is a DDP group. 
\end{cor}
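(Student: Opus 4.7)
The plan is to derive this corollary as a direct application of Theorem \ref{extconst}. Given a central extension $1 \to N \to G \xrightarrow{\pi} H \to 1$ with $N$ odd abelian and $H$ an even DDP group, Theorem \ref{extconst} will produce a DDP sequence in $G$ as soon as we verify that $\ker(\pi) \cap \mathcal{R}(G) = \{1_G\}$. So the entire task reduces to checking this one hypothesis, which the paragraph just before Theorem \ref{extconst} already flags as the point of the construction.

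To verify $N \cap \mathcal{R}(G) = \{1_G\}$, I would argue as follows. Fix $h \in N$. Because $N$ lies in the center of $G$, for every $g \in G$ we have $g^{-1} h g = h$. Thus $h$ is real in $G$ precisely when $h = h^{-1}$, i.e.\ when $h^2 = 1_G$. Since $|N|$ is odd, the order of every element of $N$ is odd, so $h^2 = 1_G$ forces $h = 1_G$ by Lagrange's theorem (or equivalently, the squaring map is a bijection on an odd abelian group). Hence no nontrivial element of $N$ is real in $G$.

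With this condition in hand, Theorem \ref{extconst} applies: $\ker(\pi) = N$ is abelian of odd order $m = |N|$ with $N \cap \mathcal{R}(G) = \{1_G\}$, and $H$ is an even DDP group, so $G$ is an even DDP group. I would also remark, as a bonus consequence of the theorem, that the quantitative bound $|\mathcal{O}_G| \geq |\mathcal{O}_H| \cdot (2m)^{(n-1-e)/2}$ transfers automatically, where $n = |H|$ and $e$ is the number of involutions in $H$. There is no serious obstacle here; the corollary is essentially a dictionary entry translating "central extension with odd kernel" into the hypothesis of Theorem \ref{extconst}.
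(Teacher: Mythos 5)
Your proposal is correct and follows the same route as the paper: both reduce the corollary to checking the hypothesis $\ker(\pi)\cap{\cal R}(G)=\{1_G\}$ of Theorem \ref{extconst}, which holds because a central element is real only if it is an involution and an odd group has no involutions. Your write-up simply makes explicit the observation the paper records just before Theorem \ref{extconst}, so there is nothing to add.
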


\begin{proof}
Let $N$ be an odd abelian group and $H$ be an even DDP group. Suppose that $\pi: G \rightarrow H$ is an epimorphism with $\ker(\pi) \cong N$. We need to show that $G$ is a DDP group. Since $\ker (\pi)$ is an odd abelian group and, by the definition of central extension, the normal subgroup $\ker(\pi)$ lies in the center of $G$, one has $\ker(\pi) \cap {\cal R}_G =\{1_G\}$, the conditions of Theorem \ref{extconst} hold, hence $G$ is an even DDP group. 
\end{proof}

\begin{theorem}\label{nil2}
Let $G$ be a finite odd nilpotent group and $K$ be an even DDP group. Then $G \times K$ is a DDP group. 
\end{theorem}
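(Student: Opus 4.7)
The plan is to prove this by induction on $|G|$, with Corollary \ref{centralext} doing all the real work. The base case $|G|=1$ is immediate, since then $G \times K \cong K$ is DDP of even order by hypothesis. The strategy for the inductive step is to peel off a single central prime-order subgroup of $G$, pass to a smaller product, and lift back via the corollary.

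For the inductive step, suppose the statement holds for every odd nilpotent group of order smaller than $|G|$, and let $G$ be a nontrivial odd nilpotent group. Since $G$ is nilpotent, $Z(G) \neq \{1_G\}$, and $|Z(G)|$ is odd. Pick any element $z \in Z(G)$ of prime order $p$ (necessarily odd) and set $N = \langle z \rangle$, a central subgroup of $G$ of odd prime order. Consider the natural projection
\[
\pi \colon G \times K \longrightarrow (G/N) \times K,
\]
whose kernel is $N \times \{1_K\}$, an abelian group of odd order $p$ contained in $Z(G) \times Z(K) = Z(G \times K)$. The quotient $G/N$ is again odd nilpotent (quotients of nilpotent groups are nilpotent) and has strictly smaller order than $G$. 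By the inductive hypothesis applied to the pair $(G/N, K)$, the product $(G/N) \times K$ is a DDP group, and it has even order since $|K|$ is even. Corollary \ref{centralext} then applies to the central extension
\[
1 \longrightarrow N \times \{1_K\} \longrightarrow G \times K \xrightarrow{\pi} (G/N) \times K \longrightarrow 1,
\]
so $G \times K$ is a DDP group, completing the induction.

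The only points that really need to be checked are that $N \times \{1_K\}$ is central in the full product (immediate, since $N \subseteq Z(G)$) and that each intermediate quotient $(G/N) \times K$ remains an \emph{even} DDP group, so that the hypotheses of Corollary \ref{centralext} stay in force at every step. Both checks are routine, so I do not expect a serious obstacle; the content of the theorem is essentially that nilpotence of $G$ supplies a central series with prime-order quotients along which Corollary \ref{centralext} can be iterated.
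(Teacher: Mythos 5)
Your proof is correct and takes essentially the same route as the paper: both arguments iterate Corollary \ref{centralext} along odd-order central subgroups, the paper via reverse induction on the upper central series and you via induction on $|G|$, peeling off one prime-order central subgroup at a time. The two inductions are interchangeable, and all the checks you flag (centrality of the kernel in the product, evenness of the intermediate quotients) do go through as you say.
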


\begin{proof}
Let $Z_0 \lhd Z_1 \lhd \cdots \lhd Z_n=G$ be the upper central series of $G$. We prove by a finite reverse induction on $0\leq i \leq n$ that $(G/Z_i) \times K$ is a DDP group. The claim is clearly true for $i=n$. Suppose we have proved that $(G/Z_{i+1}) \times K$ is DDP for $0\leq  i < n$ and we show that $(G/Z_i) \times K$ is DDP. Consider the epimorphism
$$
\pi_i: \dfrac{G}{Z_{i}} \times K \rightarrow \dfrac{G}{Z_{i+1}} \times K, ~ \pi_i(g + Z_i,~k) := (g + Z_{i+1},~k) 
$$
induced by the inclusion $Z_i \hookrightarrow Z_{i+1}$. By the inductive hypothesis $G/(Z_{i+1})\times K$ is DDP. Moreover, $\ker (\pi_i) \cong (Z_{i+1}/Z_i) \times \{1_K\}$ which is contained in the center of $G/Z_i \times K$. It then follows from Corollary \ref{centralext} that $(G/Z_i)\times K$ is DDP. When $i=0$, we conclude that $G\times K$ is DDP.
\end{proof}

\section{The abelian case}\label{abelian}
In this section, we determine all finite abelian DDP groups. We begin with describing an obstruction to the existence of a DDP 
sequence in the abelian case. For an abelian group $G$, we use $0_G$ (or simply $0$) to denote its identity element.
\begin{lemma}\label{none}
If $G$ is an abelian DDP group, then it has a unique element of order 2. 
\end{lemma}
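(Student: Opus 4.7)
The plan is to exploit a sum-of-differences identity. Since $G$ is abelian, I would work additively with $g_0=0$ throughout, and set $d_i:=g_{i+1}-g_i$ for $0\le i\le n-2$. Two easy observations get us going: each $d_i$ is nonzero (otherwise $g_{i+1}=g_i$ violates distinctness of the permutation), and by DDP the $d_i$ are pairwise distinct. Since there are $n-1$ of them and $|G\backslash\{0\}|=n-1$, the differences exhaust $G\backslash\{0\}$, i.e.\ $\{d_0,\ldots,d_{n-2}\}=G\backslash\{0\}$. In particular, the only element of $G$ that can \emph{fail} to appear as a consecutive difference is $0$ itself.

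Next I would evaluate $\sum_{i=0}^{n-2}d_i$ in two ways. By telescoping it equals $g_{n-1}-g_0=g_{n-1}$, and by the previous paragraph it equals $S:=\sum_{g\in G}g$. Hence $g_{n-1}=S$. To finish, I would compute $S$. Pairing each $g\in G$ with its inverse $-g$, only the $2$-torsion $T:=\{t\in G:2t=0\}$ contributes, so $S=\sum_{t\in T}t$. Now $T$ is an elementary abelian $2$-group of order $2^k$ for some $k\ge 0$. If $k=0$, then $S=0$ trivially. If $k\ge 2$, I would fix any nonzero $s\in T$ and observe that the map $t\mapsto t+s$ is a fixed-point-free involution on $T$; it partitions $T$ into $2^{k-1}$ pairs each summing to $s$, whence $S=2^{k-1}s=2^{k-2}(2s)=0$. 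Only $k=1$ leaves $S$ nonzero, in which case $S$ equals the unique element of order $2$.

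Putting the pieces together, in both excluded cases ($k=0$ or $k\ge 2$) we obtain $g_{n-1}=S=0=g_0$, contradicting that $g_0,\ldots,g_{n-1}$ is a permutation of $G$. Therefore $G$ must have exactly one element of order $2$. There is no real obstacle here: the proof is a short telescoping-and-counting argument, and the only mildly substantive step is the evaluation of the $2$-torsion sum, which is a standard fact about elementary abelian $2$-groups.
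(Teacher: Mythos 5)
Your proof is correct and follows essentially the same route as the paper: both rest on the telescoping identity $g_{n-1}-g_0=\sum_i d_i=\sum_{g\in G}g$ and the observation that this sum vanishes unless $G$ has exactly one involution. The only (minor) difference is in evaluating $\sum_{g\in G}g$: the paper splits into the odd-order case and the case of a decomposition $\mathbb{Z}_m\times\mathbb{Z}_n\times H$ with $m,n$ even, whereas you handle all cases uniformly via the $2$-torsion subgroup and a fixed-point-free involution $t\mapsto t+s$ --- a slightly cleaner packaging of the same computation.
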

\begin{proof}
Let $x_1,\ldots, x_k$ be a DDP sequence in $G$. Then we have 
\begin{equation}
\label{sum}
-x_1+x_k=\sum_{i=1}^{k-1} -x_i+x_{i+1}=\sum_{g \in G}g.
\end{equation}
Now let us assume to the contrary that either $G$ has odd order or it has more than one element of order 2. Firstly, if $G$ has odd order, 
we have $2\sum_{g\in G}g=\sum_{g\in G}g+\sum_{g\in G}(-g)=0_G$, and (\ref{sum}) implies that $x_k = x_1$, which is not allowed.
Secondly, if $G$ has more than one element of order 2, then one can write $G=\mathbb{Z}_m \times \mathbb{Z}_n \times H$ for even 
integers $m,n$, and an abelian group $H$. But then
$$
\sum_{g\in G}g=\left (mn|H|/2, mn|H|/2, mn\sum_{h\in H}h \right ) = (0_{\mathbb{Z}_m}, 0_{\mathbb{Z}_n}, 0_H) = 0_G \in G,
$$
since $\sum_{i \in \mathbb{Z}_n}i=n(n-1)/2=n/2$ modulo $n$ and $2\sum_{h\in H}h=0_H$. 
Now it follows again from (\ref{sum}) that
$$
-x_1+x_k = 0_G,
$$
which contradicts the assumption that $x_1,\ldots, x_k$ are distinct. 
\end{proof}

In the next Lemma we consider the group $(\mathbb{Z}_n,+)$ where $n=2^m$. 

\begin{lemma}\label{power2}
Let $n=2^m$, where $m$ is a positive natural number. Then the following statements hold.
\begin{itemize}
\item[a)] The sequence $x_i=i(i+1)/2$, $0\leq i \leq n-1$, is a DDP sequence modulo $n$ for all $m\geq 1$. 

\item[b)] The sequence 
$$y_i=\begin{cases}i(i+1)/2 & \mbox{if $0\leq i< 2^{m-2}$ or $ 3\cdot 2^{m-2} \leq i < 2^{m}$,}\\
i(i+1)/2+2^{m-1} & \mbox{if $2^{m-2} \leq i < 3 \cdot 2^{m-2}$,}
\end{cases}$$
is a DDP sequence modulo $n$ for all $m\geq 2$. 
\end{itemize}
\end{lemma}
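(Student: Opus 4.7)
I would handle both parts in a unified way, based on the telescoping identity $x_{i+1} - x_i = (i+1)(i+2)/2 - i(i+1)/2 = i + 1$. For part (a), this immediately yields that the $n - 1$ consecutive differences are $1, 2, \ldots, n-1$, pairwise distinct mod $n = 2^m$. The bijection check reduces to $x_i - x_j = (i-j)(i+j+1)/2 \equiv 0 \pmod{2^m}$, i.e., $2^{m+1} \mid (i-j)(i+j+1)$. Since exactly one of the two factors $i-j$ and $i+j+1$ is odd, and therefore a unit mod $2^{m+1}$, the full power $2^{m+1}$ must divide the other (even) factor. But the index range $0 \le i, j \le 2^m - 1$ gives $|i-j| < 2^{m+1}$ and $0 < i + j + 1 < 2^{m+1}$, so the only solution is $i - j = 0$.

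For part (b), I would split the indices into the shifted set $S = [2^{m-2}, 3 \cdot 2^{m-2})$ (where $y_i = x_i + 2^{m-1}$) and its complement $U$ (where $y_i = x_i$). A direct computation of the consecutive differences of $y$ shows that they all equal $i + 1$ except at the two boundary positions: at $i + 1 = 2^{m-2}$ the difference becomes $2^{m-2} + 2^{m-1} = 3 \cdot 2^{m-2}$, and at $i + 1 = 3 \cdot 2^{m-2}$ it becomes $3 \cdot 2^{m-2} - 2^{m-1} = 2^{m-2}$. This is exactly a swap of the values $2^{m-2}$ and $3 \cdot 2^{m-2}$ in the difference list, so the multiset of differences is still $\{1, \ldots, n-1\}$, hence pairwise distinct mod $n$. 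For the permutation property, injectivity on each of $U$ and $S$ is inherited from $x$, so a putative collision must have the form $y_i = y_j$ with $i \in U$, $j \in S$, equivalent to $(i - j)(i + j + 1) \equiv 2^m \pmod{2^{m+1}}$. The same parity argument as in (a) (with right-hand side $2^m$ instead of $0$) forces $i + j + 1 = 2^m$, since the inverse of an odd residue is odd so the even factor must itself be $\equiv 2^m \pmod{2^{m+1}}$, and only the choice $i + j + 1 = 2^m$ fits the range. But $j \in S$ then gives $i = n - 1 - j \in (2^{m-2} - 1, \, 3 \cdot 2^{m-2} - 1] \subset S$, contradicting $i \in U$.

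\emph{Main obstacle.} The only step that is not bookkeeping is recognizing that the two perturbations of the difference sequence in (b) conspire to \emph{swap} the values at positions $2^{m-2}$ and $3 \cdot 2^{m-2}$ — the identities $2^{m-2} + 2^{m-1} = 3 \cdot 2^{m-2}$ and $3 \cdot 2^{m-2} - 2^{m-1} = 2^{m-2}$ are exactly what make the choice of shift $2^{m-1}$ and of break points $2^{m-2}$, $3 \cdot 2^{m-2}$ work. Once that is in place, both (a) and the distinct-difference half of (b) are immediate, and the permutation check in (b) reduces to the same $2$-adic parity analysis of $(i-j)(i+j+1)$ modulo $2^{m+1}$ that drives (a).
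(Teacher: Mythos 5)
Your proposal is correct, and it diverges from the paper in an interesting way on part (a). The paper reduces (a), exactly as you do, to the claim that $i \mapsto i(i+1)/2$ is a bijection on $\mathbb{Z}_{2^m}$, but it then proves \emph{surjectivity} by citing the number-theoretic fact that $8j+1$ is a quadratic residue modulo $2^{m+3}$, solving $8j+1=(2i+1)^2$; you instead prove \emph{injectivity} directly from the factorization $x_i-x_j=(i-j)(i+j+1)/2$, noting that exactly one factor is odd and that the range bounds $|i-j|<2^{m+1}$, $0<i+j+1<2^{m+1}$ force $i=j$. Your route is more elementary and self-contained (no external citation), and it has the added benefit that the same $2$-adic computation with right-hand side $2^m$ instead of $0$ disposes of the cross-collisions in part (b). For part (b) your key observation -- that the two boundary perturbations swap the difference values $2^{m-2}$ and $3\cdot 2^{m-2}$ -- is exactly the paper's argument. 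Notably, the paper stops there and does not explicitly verify that $y_0,\ldots,y_{n-1}$ is a permutation of $\mathbb{Z}_n$ (distinctness of consecutive differences alone does not imply this); your check that a collision $y_i=y_j$ with $i\in U$, $j\in S$ would force $i+j+1=2^m$ and hence $i\in S$ fills a gap the paper leaves implicit, so your write-up is in this respect more complete than the published proof.
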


\begin{proof}
Since $x_{i+1}-x_i=i+1$, part (a) is equivalent to the claim that $i \mapsto i(i+1)/2$ is a bijection on $\mathbb{Z}_n$. If $n=2^m$, then the map $i \mapsto i(i+1)/2$ is a bijection modulo $n$. To see this, let $j \in \mathbb{Z}_n$ be arbitrary. Then $8j+1$ is a quadratic residue modulo $2^{m+3}$ \cite[Thm.\ 5-1]{quadres2}. Hence there exists $i \in \mathbb{Z}_n$ such that $8j+1=(2i+1)^2 \pmod 2^{m+3}$, and so $j \equiv i(i+1)/2 \pmod n$. It follows that $i \mapsto i(i+1)/2$ is onto, hence a bijection, on $\mathbb{Z}_n$. 

For part (b), one verifies that the sequence of consecutive differences of $y_0,\ldots, y_{n-1}$ is given by
$$
0,1,2,\ldots, 2^{m-2}-1, 3 \cdot 2^{m-2}, 2^{m-2}+1, \ldots, 3 \cdot 2^{m-2}-1, 2^{m-2}, 3 \cdot 2^{m-2}+1, \ldots, 2^m-1,
$$
which is obtained from the sequence $0,1,\ldots, 2^m-1$ by exchanging $2^{m-2}$ and the product $3 \cdot 2^{m-2}$, hence $y_0,\ldots, y_{n-1}$ is a DDP sequence. 
\end{proof}

\begin{cor}
If $n=2^m$, $m\geq 3$, then $|{\cal O}_{\mathbb{Z}_n}|\geq n$. 
\end{cor}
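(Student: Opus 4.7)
The plan is to combine the two DDP sequences supplied by Lemma \ref{power2} with the observation from the introduction that multiplying a DDP sequence coordinate-wise by any unit of $\mathbb{Z}_n$ yields another DDP sequence. Since $n=2^m$, the unit group has $\phi(n)=2^{m-1}=n/2$ elements (the odd residues), so each of the two base sequences produces $n/2$ DDP sequences. If these two families are disjoint, we obtain a total of $n/2+n/2=n$ distinct DDP sequences, proving the bound.

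First I would let $x_0,\ldots,x_{n-1}$ and $y_0,\ldots,y_{n-1}$ denote the two DDP sequences from Lemma \ref{power2}(a) and (b), which are well-defined because $m\geq 3$ implies $m\geq 2$. For each odd $r\in\{1,3,\ldots,n-1\}$, both $rx_0,\ldots,rx_{n-1}$ and $ry_0,\ldots,ry_{n-1}$ are DDP sequences modulo $n$. I would separate these into the families $\mathcal{X}=\{(rx_i)_i : r\text{ odd}\}$ and $\mathcal{Y}=\{(ry_i)_i : r\text{ odd}\}$.

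Distinctness within each family is immediate from the second term: since $x_1=y_1=1$ (using $m\geq 3$ so $2^{m-2}\geq 2$), the $i=1$ entry of $rx$ is $r$ and of $ry$ is $r$, so distinct $r$ give distinct sequences. For disjointness of the two families, suppose $(rx_i)_i=(sy_i)_i$ for some odd $r,s$. Evaluating at $i=1$ gives $r=s$, so the identity becomes $rx_i=ry_i$ for all $i$; multiplying by the inverse of $r$ modulo $n$ forces $x_i=y_i$ for all $i$, contradicting Lemma \ref{power2} since $x$ and $y$ differ at, say, $i=2^{m-2}$ (where $y$ picks up the additive shift of $2^{m-1}$).

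The only thing to verify carefully is the claim $x_1=y_1=1$, which I would do by noting $1<2^{m-2}$ when $m\geq 3$, so $y_1$ falls into the first branch of the definition and equals $1\cdot 2/2 = 1 = x_1$. That's essentially the whole argument: two base sequences, $\phi(n)=n/2$ unit scalings each, disjointness certified by one easy evaluation, giving $|\mathcal{O}_{\mathbb{Z}_n}|\geq 2\cdot (n/2)=n$. There is no real obstacle here; the most delicate point is simply making sure the families $\mathcal{X}$ and $\mathcal{Y}$ don't overlap, but the $i=1$ test disposes of that at once.
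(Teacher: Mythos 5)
Your proposal is correct and follows exactly the same route as the paper's proof: take the two sequences from Lemma \ref{power2}, multiply each by the $\phi(n)=n/2$ odd units, and count $2\cdot(n/2)=n$ distinct DDP sequences. You simply spell out the distinctness check (via the $i=1$ entry) that the paper leaves implicit, which is a worthwhile addition since that check is precisely where the hypothesis $m\geq 3$ is used (for $m=2$ one has $3x=y$ and the two families collapse).
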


\begin{proof}
For $m\geq 3$, the two DDP sequences in Lemma \ref{power2} are distinct. Moreover, $rx_0,\ldots, rx_{n-1}$, and $ry_0,\ldots, ry_{n-1}$, are DDP sequences for every odd number $r \in \mathbb{Z}_n$, and the corollary follows. 
\end{proof}

\begin{theorem}
Let $G$ be an abelian group. Then $G$ is a DDP group if and only if $G$ has exactly one element of order 2. 
\end{theorem}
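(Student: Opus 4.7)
The forward direction is already done: Lemma \ref{none} shows that if an abelian $G$ is DDP then it has a unique element of order $2$. So my plan is to prove the converse, namely that every finite abelian group with exactly one element of order $2$ is DDP.

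The first step is to use the classification of finite abelian groups to pin down the structure of such a $G$. The elements of order dividing $2$ in $G$ form a subgroup whose order equals $2^k$, where $k$ is the number of cyclic factors of even order in the invariant factor decomposition of $G$. Having a unique element of order $2$ forces $k=1$, so the $2$-part of $G$ is cyclic, and I can write
\[
G \cong \mathbb{Z}_{2^m} \times A,
\]
where $m\geq 1$ and $A$ is an abelian group of odd order (possibly trivial).

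With this decomposition in hand, I want to invoke the machinery already built. First, Lemma \ref{power2}(a) provides an explicit DDP sequence in $K=\mathbb{Z}_{2^m}$ for every $m\geq 1$, so $K$ is an even DDP group. Second, $A$ is a finite abelian group of odd order, hence certainly a finite odd nilpotent group. These are exactly the hypotheses of Theorem \ref{nil2}, which then yields that $A\times K = A \times \mathbb{Z}_{2^m} \cong G$ is a DDP group.

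Nothing in this argument is really delicate: the hard work has been front-loaded into Lemma \ref{power2} (the base case $\mathbb{Z}_{2^m}$) and into Theorem \ref{nil2} (which repeatedly lifts DDP sequences across central extensions of an odd abelian kernel, using the Slonimsky sequences from Theorem \ref{spddp}). The only conceptual step here is the structural observation that ``unique element of order $2$'' is equivalent to the $2$-Sylow of $G$ being cyclic, which is what lets the abelian case slot cleanly into the $G \times K$ framework of Theorem \ref{nil2}. The main potential pitfall is just making sure the edge case $A$ trivial is covered — but then $G=\mathbb{Z}_{2^m}$ is handled directly by Lemma \ref{power2}, so there is nothing extra to verify.
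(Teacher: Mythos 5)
Your proposal is correct and follows the paper's proof essentially verbatim: Lemma \ref{none} for the forward direction, then the decomposition $G \cong \mathbb{Z}_{2^m}\times A$ with $A$ odd abelian, Lemma \ref{power2} for the base case, and Theorem \ref{nil2} to conclude. The only difference is that you spell out the structure-theory justification for the decomposition, which the paper leaves implicit.
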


\begin{proof}
In light of Lemma \ref{none}, it is left to show that if $G=H \times \mathbb{Z}_{2^m}$, where $m\geq 1$ and $H$ is an odd abelian group, then $G$ is DDP. Since $H$ is an odd nilpotent group and $\mathbb{Z}_{2^m}$ is an even DDP group by Lemma \ref{power2}, the claim follows from Theorem \ref{nil2}.
\end{proof}

\begin{cor}\label{sizeo}
Let $c_1=1$, $c_2=2$, and $c_m=2^m$ for $m\geq 3$. If 
$G=\mathbb{Z}_{2^m} \times \mathbb{Z}_{n_1}\times \cdots \times \mathbb{Z}_{n_k}$ 
where $n_1,\ldots, n_k$ are odd integers and $m\geq 1$, then 
$$
|{\cal O}_G| \geq c_m \times (2n_1)^{2^{m-1}-1}\times (2n_2)^{2^{m-1}n_1-1}\cdots (2n_k)^{2^{m-1}n_1\cdots n_{k-1}-1}.
$$
In particular, if an abelian group $G$ has size $2^m kl $, where $m\geq 1$ and $k,l$ are relatively prime odd integers, then $|{\cal O}_G| \geq (2k)^{l-1}.$
\end{cor}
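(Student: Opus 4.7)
The plan is to iterate Theorem \ref{extconst} along the tower of abelian central extensions interpolating between $\mathbb{Z}_{2^m}$ and $G$. Set $H_0 := \mathbb{Z}_{2^m}$ and, for $1 \leq j \leq k$, let $H_j := \mathbb{Z}_{2^m} \times \mathbb{Z}_{n_1} \times \cdots \times \mathbb{Z}_{n_j}$, so $H_k = G$. The coordinate projection $\pi_j : H_{j+1} \to H_j$ is an epimorphism whose kernel $\mathbb{Z}_{n_{j+1}}$ is an odd abelian subgroup contained in the center of $H_{j+1}$; as noted just before Theorem \ref{extconst}, centrality together with odd order forces $\ker(\pi_j) \cap {\cal R}(H_{j+1}) = \{0\}$.

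For the base of the induction I would cite the corollary immediately following Lemma \ref{power2} to obtain $|{\cal O}_{\mathbb{Z}_{2^m}}| \geq 2^m = c_m$ for $m \geq 3$; the small values $c_1 = 1$ and $c_2 = 2$ are read off directly from Lemma \ref{power2}(a) after multiplication by the units of $\mathbb{Z}_{2^m}$. For the inductive step, $H_j$ is an even DDP group of order $2^m n_1\cdots n_j$ with exactly one element of order $2$ (inherited from the $\mathbb{Z}_{2^m}$-factor, since the $n_i$ are odd), so that the parameter $e$ of Theorem \ref{extconst} equals $1$. The theorem then yields that $H_{j+1}$ is itself an even DDP group and that
$$
|{\cal O}_{H_{j+1}}| \;\geq\; |{\cal O}_{H_j}| \cdot (2 n_{j+1})^{(|H_j| - 2)/2} \;=\; |{\cal O}_{H_j}| \cdot (2 n_{j+1})^{2^{m-1} n_1 \cdots n_j - 1}.
$$
Telescoping this inequality over $j = 0, 1, \ldots, k-1$ produces exactly the main bound in the statement.

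For the ``in particular'' clause, any abelian DDP group of size $2^m k l$ with $\gcd(k,l) = 1$ must have $\mathbb{Z}_{2^m}$ as its $2$-Sylow subgroup by Lemma \ref{none}, and so decomposes as $\mathbb{Z}_{2^m} \times A \times B$ with $|A| = k$, $|B| = l$. Writing $A = \mathbb{Z}_{a_1}\times \cdots \times \mathbb{Z}_{a_s}$ and $B = \mathbb{Z}_{b_1}\times \cdots \times \mathbb{Z}_{b_t}$ and applying the main bound with the $b_i$-factors listed ahead of the $a_i$-factors, each $a_i$ carries exponent $2^{m-1} l\, a_1\cdots a_{i-1} - 1 \geq l - 1$ (using $m \geq 1$); hence the $A$-part of the product is at least $\prod_{i=1}^s (2a_i)^{l-1} = (2^s k)^{l-1} \geq (2k)^{l-1}$, while the remaining factors are all $\geq 1$. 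I do not foresee any serious obstacle: the only care required is the exponent arithmetic in the telescoping product and the choice of ordering of the odd cyclic factors so that the specific form $(2k)^{l-1}$ can be read off.
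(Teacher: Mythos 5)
Your proof is correct and follows essentially the same route as the paper's: induction on the number of odd cyclic factors, applying Theorem \ref{extconst} at each step with $e=1$ (the unique involution coming from the $\mathbb{Z}_{2^m}$-factor) and with the base case supplied by Lemma \ref{power2} and the corollary after it. If anything you are more explicit than the paper on the ``in particular'' clause, where your choice to list the $l$-part before the $k$-part is exactly what makes the exponents work out; the only caveat is that the final step $(2^{s}k)^{l-1}\ge (2k)^{l-1}$ degenerates when $k=1$ (so $s=0$), a boundary case that the paper's one-line justification glosses over as well.
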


\begin{proof}
Proof is by induction on $k$. If $k=0$, the claim follows from Lemma \ref{power2}. For the inductive step, let 
$G=\mathbb{Z}_{n_{k+1}}\times H$, where by the inductive hypothesis
$$|{\cal O}_H| \geq c_m \times (2n_1)^{2^{m-1}-1}\times (2n_2)^{2^{m-1}n_1-1}\cdots (2n_k)^{2^{m-1}n_1\cdots n_{k-1}-1}.$$
By Theorem \ref{extconst}, we have
$$|{\cal O}_G| \geq (2n_{k+1})^{(|H|-1-e)/2}|{\cal O}_H|,$$
where $e$ is the number of elements of order 2. It follows from $G=\mathbb{Z}_{2^m} \times \mathbb{Z}_{n_1}\times \cdots \times \mathbb{Z}_{n_k}$ that one has $e=1$, and the claim follows. 
The last claim of the Corollary \ref{sizeo} follows from $G \cong \mathbb{Z}_{2^m} \times \mathbb{Z}_k \times \mathbb{Z}_l$.
\end{proof}

\section{The non abelian case}\label{nonabelian}

{Computer searches show that the smallest non abelian DDP group is the dihedral group $D_5$, which has 320 DDP sequences. If we present $D_5$ in terms of generators and relations as
$$
D_5 \cong \langle a,b ~|~ a^5=b^2=1,~aba=b \rangle,
$$ 
an example of a DDP sequence in $D_5$ is
$$
1,a,a^3,ba^3,a^2,b, a^4, ba^4, ba^2, ba,
$$
with the corresponding sequence of distinct divisors
$$
1,a,a^2,ba,b,ba^2,ba^4,ba^3,a^3,a^4.
$$
The group $D_6$ has 3072 DDP sequences, and the alternating group on four elements $A_4$ has 2304 DDP sequences. 

Computer searches 
also confirm that $D_7$ is a DDP group, and we conjecture that $D_n$ is a DDP group for all $n\geq 5$.
As we noted in Lemma \ref{none}, an abelian group of odd order is not DDP. 
However, the next example shows that in the non abelian case, DDP groups of odd order do exist. 

\begin{example}
Consider the smallest non abelian group of an odd order, that is let $G=\mathbb{Z}_7 \rtimes \mathbb{Z}_3$ be 
the non abelian group of order 21. In generators and relations, $G$ is given by
$$
G \cong \langle a,b ~|~ a^7=b^3=1,~a^2b=ba \rangle.
$$
The following sequence is a DDP sequence in $G$:
$$1, a,ba^6,ba^2,a^3,a^5,b,b^2a^4,ba^4,b^2a^2,ba^5,ba^3,a^6,b^2a^3,ba,b^2,b^2a^6,a^2,b^2a,b^2a^5,a^4,$$
where the sequence of distinct divisors is given by
$$1, a,ba^2,a^3,b^2a^6,a^2,ba,ba^4,b^2a^3,b,b^2a,a^5,b^2,b^2a^5,b^2a^2,ba^3,a^6,ba^6,b^2a^4,a^4,ba^5.$$
\end{example}

The next lemma provides a construction of DDP groups via semidirect products. Consider for example the 
semidirect product $G=\mathbb{Z}_9 \rtimes_\phi \mathbb{Z}_6$, where 
$\phi: \mathbb{Z}_6 \rightarrow Aut(\mathbb{Z}_9)$ is defined by 

$$
\phi_t(j):=
\begin{cases}
j & \mbox{if}~ t\equiv 0 \pmod 3;\\
4j & \mbox{if}~ t\equiv 1 \pmod 3;\\
7j & \mbox{if}~ t\equiv 2 \pmod 3.\\
\end{cases}
$$ 
\noindent Then $G$ is a DDP group by the following lemma.

\begin{lemma}\label{semidirect}
Let $\phi: \mathbb{Z}_{n} \rightarrow Aut(\mathbb{Z}_m)$ be a group homomorphism such that $1+\phi_s(1)$ is a generator of 
$\mathbb{Z}_m$ for all $s\in \mathbb{Z}_n$. If $m$ is odd and $n$ is even, then
$\mathbb{Z}_m \rtimes_\phi \mathbb{Z}_n$ is a DDP group. 
\end{lemma}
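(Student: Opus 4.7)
The plan is to present this lemma as a direct application of Theorem~\ref{extconst} to the canonical projection $\pi\colon G = \mathbb{Z}_m \rtimes_\phi \mathbb{Z}_n \to \mathbb{Z}_n$ given by $(j,s) \mapsto s$. Its kernel $N = \mathbb{Z}_m \times \{0\}$ is abelian of odd order $m$, and the quotient $\mathbb{Z}_n$ is an even DDP group: since $n$ is even, $\mathbb{Z}_n$ has a unique element of order $2$, so either the abelian classification of DDP groups or, more concretely, the Slonimsky sequence \eqref{slonseq} produces a DDP sequence in $\mathbb{Z}_n$. Three of the four hypotheses of Theorem~\ref{extconst} are therefore automatic, and the whole content of the lemma reduces to verifying the condition $N \cap \mathcal{R}(G) = \{1_G\}$.

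To check this, I would carry out the conjugation computation in the semidirect product. Using $(k,t)^{-1} = (-\phi_{-t}(k),\, -t)$ together with the product rule $(j_1,s_1)(j_2,s_2) = (j_1 + \phi_{s_1}(j_2),\, s_1 + s_2)$, a short calculation gives
\[
(k,t)^{-1}(j,0)(k,t) \;=\; (\phi_{-t}(j),\, 0).
\]
Hence $(j,0) \in N$ is a real element of $G$ if and only if there exists $t \in \mathbb{Z}_n$ with $\phi_{-t}(j) = -j$. Since $\mathbb{Z}_m$ is cyclic, every group endomorphism $\phi_{-t}$ acts as multiplication by $\phi_{-t}(1)$, so this equation becomes
\[
\bigl(1 + \phi_{-t}(1)\bigr)\, j \;\equiv\; 0 \pmod{m}.
\]
By hypothesis $1 + \phi_{-t}(1)$ generates $\mathbb{Z}_m$, hence is a unit modulo $m$, which forces $j = 0$. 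Therefore $N \cap \mathcal{R}(G) = \{1_G\}$.

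With all hypotheses of Theorem~\ref{extconst} now confirmed, the theorem lifts any DDP sequence of $\mathbb{Z}_n$ to a DDP sequence of $G$, completing the proof. There is no serious obstacle here; the only substantive computation is the conjugation identity above, and the conceptual point is simply recognizing that the stated condition ``$1 + \phi_s(1)$ generates $\mathbb{Z}_m$'' is precisely the translation of the ``no nontrivial real elements in the kernel'' hypothesis of Theorem~\ref{extconst} into the language of the semidirect product.
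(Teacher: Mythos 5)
Your proposal is correct and follows essentially the same route as the paper: both reduce the lemma to the hypothesis $N \cap \mathcal{R}(G) = \{1_G\}$ of Theorem~\ref{extconst} and verify it by a short computation in the semidirect product, using that $1+\phi_s(1)$ is a unit modulo $m$. The only cosmetic difference is that you check the definition of a real element directly via $g^{-1}(j,0)g = (-j,0)$, while the paper uses the equivalent reformulation $\alpha g \alpha \neq g$ from \eqref{defreal}; the underlying identity $(1+\phi_s(1))k \equiv 0 \Rightarrow k = 0$ is the same.
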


\begin{proof}
Consider the projection $\pi: \mathbb{Z}_m \rtimes_\phi \mathbb{Z}_n \rightarrow \mathbb{Z}_n$ with $\ker(\pi)=\mathbb{Z}_m \times \{0\}$. The claim follows from Theorem \ref{extconst} if we show that $\alpha g \alpha=g \Rightarrow \alpha=0$ for all $\alpha \in \mathbb{Z}_m \times \{0\}$ and $g\in \mathbb{Z}_m \rtimes_\phi \mathbb{Z}_n$. Let $g=(r,s)$ and $\alpha=(k,0)$. Then
$$\alpha g \alpha=(k,0)(r,s)(k,0)=(r+k+\phi_s(k),s) \neq (r,s),$$
since $k+ \phi_s(k) \neq 0$ for all $k\neq 0$; otherwise, $k(1+\phi_s(1))=0$ which contradicts the assumption. 
\end{proof}

Finally, we show that there exist infinitely many non abelian DDP groups.
\begin{theorem}\label{infinite}
Let $p$ be a prime with $p \equiv 3 \pmod 4$ and let $t$ be a primitive root modulo $p$. Then 
$\mathbb{Z}_p \rtimes_\phi \mathbb{Z}_{p-1}$ is a DDP group, where $\phi: \mathbb{Z}_{p-1} \rightarrow Aut(\mathbb{Z}_p)$ 
is given by $\phi_s(x)=t^{2s}x$. In particular, there exist infinitely many non abelian DDP groups. 
\end{theorem}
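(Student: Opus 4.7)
The plan is to apply Lemma \ref{semidirect} with $m=p$ and $n=p-1$. The parity hypotheses of that lemma are automatic: $p$ is an odd prime, so $m$ is odd, and $p-1$ is even. I would begin with the routine verification that $\phi_s(x)=t^{2s}x$ gives a well defined group homomorphism $\mathbb{Z}_{p-1}\to\mathrm{Aut}(\mathbb{Z}_p)$, using $t^{p-1}\equiv 1\pmod p$ for well definedness and the identity $t^{2(s_1+s_2)}=t^{2s_1}t^{2s_2}$ for the homomorphism property. The only substantive hypothesis of Lemma \ref{semidirect} left to check is that $1+\phi_s(1)=1+t^{2s}$ generates the cyclic group of prime order $\mathbb{Z}_p$ for every $s$, which, since $p$ is prime, amounts to showing $1+t^{2s}\not\equiv 0\pmod p$.

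The main step is a short parity argument. Suppose for contradiction that $t^{2s}\equiv -1\pmod p$. Since $t$ is a primitive root modulo $p$, the unique element of order $2$ in $(\mathbb{Z}/p)^\times$ is $-1=t^{(p-1)/2}$, so the congruence forces $2s\equiv (p-1)/2\pmod{p-1}$. The hypothesis $p\equiv 3\pmod 4$ is exactly what makes $(p-1)/2$ odd, while $2s$ and the modulus $p-1$ are both even; comparing parities gives a contradiction. I expect this parity check to be the main content of the proof, since everything else is an unpacking of definitions. With $1+t^{2s}\neq 0$ established for all $s$, Lemma \ref{semidirect} endows $G:=\mathbb{Z}_p\rtimes_\phi\mathbb{Z}_{p-1}$ with a DDP sequence.

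To finish the "infinitely many" clause, I would observe that $G$ is non abelian precisely when $\phi$ is non trivial, and $\phi_1(x)=t^2 x$ is the identity iff $t^2\equiv 1\pmod p$, i.e., iff $p=3$. Thus every prime $p\equiv 3\pmod 4$ with $p>3$ (e.g. $p=7,11,19,23,\ldots$) yields a non abelian DDP group of order $p(p-1)$. Since there are infinitely many primes congruent to $3$ modulo $4$ by an elementary Euclid style argument (no appeal to Dirichlet is required), and the resulting groups have pairwise distinct orders, they are pairwise non isomorphic, completing the proof.
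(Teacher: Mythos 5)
Your proof is correct and follows essentially the same route as the paper: both reduce to Lemma \ref{semidirect} and establish $1+t^{2s}\not\equiv 0 \pmod p$ by a parity argument exploiting $p\equiv 3\pmod 4$ (the paper squares to get $4s\equiv 0\pmod{p-1}$, you compare $2s$ with the odd exponent $(p-1)/2$ of $-1$; these are the same observation). Your explicit check that the group is non abelian only for $p>3$ and your remark on pairwise distinct orders are welcome details that the paper leaves implicit.
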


\begin{proof}
We first show that $t^{2s}$ is not congruent to $-1$ modulo $p$ for every $s\in \mathbb{Z}_{p-1}$. If on the contrary, $t^{2s} \equiv -1 \pmod p$, we have $4s \equiv 0 \pmod{p-1}$, which implies that $2s \equiv 0 \pmod{p-1}$ since $p \equiv 3 \pmod 4$. But then $t^{2s} \equiv 1 \pmod p$, which is a contradiction. It follows that $1+\phi_s(1) \neq 0$ for all $s\in \mathbb{Z}_{p-1}$, and the claim 
follows from Lemma \ref{semidirect}. 
\end{proof}

\end{document}